\newtheorem{definition}{Definition}[section]
\newtheorem{theorem}[definition]{Theorem}
\newtheorem{remark}[definition]{Remark}
\numberwithin{equation}{section}
\def\cF{{\mathcal {F}}}
\def\RR{{\mathbb{ R}}} 
\def\cB{{\mathcal{B}}}
\def\EE{{\mathbb{ E}}}
\def\1B{\text{1\!\!I}}
\begin{document}
\date{1 August 2017}
\title{Linear Volterra backward stochastic differential equations}

\author{
Yaozhong Hu$^{1,2}$ and Bernt \O ksendal$^{2}$   }

\footnotetext[1]{ {Department of Mathematical 
and Statistical Sciences,     University of Alberta,  Edmonton,  Alberta, 
Canada T6G 2G1.
Email: {\tt yaozhong@ualberta.ca}. \\
Y. Hu is partially supported by a grant from the Simons Foundation \#209206 and \#523775.}}
\footnotetext[2]{Department of Mathematics, University of Oslo, P.O. Box 1053 Blindern, N--0316 Oslo, Norway.\\
  Email: {\tt oksendal@math.uio.no}.\\
This research was carried out with support of the Norwegian Research Council, within the research project Challenges in Stochastic Control, Information and Applications (STOCONINF), project number 250768/F20.
 }

\maketitle

\paragraph{MSC(2010):} 60H07,  60H20, 60H30,  45D05, 45R05. 

\paragraph{Keywords:}  Brownian motion, compensated Poisson random measure, Volterra type backward stochastic 
differential equation, linear equation, explicit solution, Hida-Malliavin derivative.

\begin{abstract} We present an  explicit solution triplet 
$(Y, Z, K)$  to the  backward stochastic Volterra integral equation (BSVIE) of linear type, driven by a Brownian motion and a compensated Poisson random measure.  The process $Y$ is expressed by   an  integral 
whose kernel is explicitly given. The processes $Z$ and $K$ are expressed by
Hida-Malliavin derivatives involving $Y$.  
\end{abstract}


 \section{Introduction and main theorem}\label{sec1}
 Backward stochastic Volterra integral equations (BSVIEs) were introduced 
  to solve stochastic optimal control problems for controlled Volterra type systems. Yong (see e.g. \cite{Y1}, \cite{Y2})  gives a systematic study, including  the existence and uniqueness of the solution of general nonlinear equations.  
In general the classical backward stochastic differential equations are hard to solve, and those of Volterra type are even worse. In this paper we aim to present an explicit solution formula for linear BSVIEs.  

To present our main result we start with     a basic  probability space
$(\Omega, \mathcal{F}, \mathbb{P})$  
equipped with a filtration 
$\mathbb{F}=\left\{\mathcal{F}_t \right\}_{0\le t\le T}$ 
satisfying the usual condition.  
Let $\left(B(t), 0\le t\le T\right)$ be a  one 
dimensional  Brownian motion  and 
let $\left\{ N(t, A)\,, 0\le t\le T\,, A\in \cB(\RR)\right\}$ be an independent Poisson random measure with Lévy measure $\nu$.  Both the Brownian motion $B$ 
and Poisson random measure $N$ are adapted to  the filtration $\mathbb{F}$.  
Let $\left\{F(t)\,, 0\le t\le T \right\}$ be a given stochastic 
process, not necessarily adapted. 

Let $\left(\Phi(t,s) , 0\le t<s\le T\right)$  and $(\xi(s), \beta(s,\zeta); 0\le s\le T\,, \zeta\in \RR)$  be given measurable functions 
of $t$, $s$, and $\zeta$,  with values in $\mathbb{R}$. For simplicity we assume that these functions are bounded, and we assume that there exists $\varepsilon > 0$ such that $\beta(s,\zeta) \geq -1 + \varepsilon$ for all $s,\zeta$.
We consider the following  linear backward stochastic Volterra integral equations in the unknown process triplet $(Y(t), Z(t,s),K(t,s,\zeta))$:
 \begin{align}
 Y(t)&= F(t)+\int_t^T \left[ \Phi(t,s) Y(s)+\xi(s) Z(t,s)+\int_{\mathbb{R}}\beta(s,\zeta)K(t,s,\zeta)\nu(d\zeta)\right] ds\nonumber\\
& -\int_t^T Z(t,s) dB(s)-\int_t^T\int_{\mathbb{R}^d}K(t,s,\zeta)\tilde{N}(ds,d\zeta)\,,
 \label{e.1.1}
 \end{align}
where $0\le t\le T$ and $\tilde N(dt,d\zeta)=N(dt,d\zeta)-\nu(d\zeta)dt$ is the compensated Poisson random measure.
  We want to find three  processes
$\left(Y(t),Z(t,s),K(t,s,\zeta)\,, 0\le t\le s\le T\,, \zeta\in \RR\right)$ which are adapted (meaning that 
$Y(s),Z(t,s),K(t,s,\zeta)\in \mathcal{F}_s$ for any $s\in [t, T]$ and any $\zeta \in \mathbb{R}$) such that the above equation \eqref{e.1.1} is satisfied.

To find such a solution, we   first  try to get rid of the unknowns  $Z(t,s)$ and $K(t,s,\zeta)$  inside      the first integral 
in \eqref{e.1.1}. 
To this end, 
we   define the probability measure $\mathbb{Q}$ by
\begin{equation}
d\mathbb{Q}=M(T)d\mathbb{P} \text{ on } \mathcal{F}_T,
\end{equation}
where
\begin{align}
M(t)&:= \exp \Big(\int_0^t \xi(s)dB(s) - \frac{1}{2}\int_0^t \xi^2(s)ds +\int_0^t\int_{\mathbb{R}} \ln (1+\beta(s,\zeta))\tilde{N}(ds,d\zeta) \nonumber\\
&+\int_0^t \int_{\mathbb{R}} \{ \ln (1+\beta(s,\zeta)) - \beta(s,\zeta) \} \nu(d\zeta)ds \Big); \quad 0 \leq t \leq T.
\end{align}
Then (see e.g. \cite{OS} Ch.1)
under the  probability measure $\mathbb{Q}$ the process
\begin{equation}
B_{\mathbb{Q}}(t):=B(t)-\int_0^t \xi(s) ds\,,\quad 0\le t\le T\,. 
\label{e.def_bq} 
\end{equation}
 is a Brownian motion, and the random measure
 \begin{equation}
 \tilde{N}_{\mathbb{Q}}(dt,d\zeta):=\tilde{N}(dt,d\zeta)-\beta(t,\zeta)\nu(d\zeta)dt
 \label{e.def_nq} 
 \end{equation}
 is the $\mathbb{Q}$-compensated Poisson random measure of $N(\cdot,\cdot)$, in the sense that the process
 
$$\tilde{N}_{\gamma}(t):=\int_0^t \int_{\mathbb{R}} \gamma(s,\zeta) \tilde{N}_{\mathbb{Q}}(ds,d\zeta)$$
 is a local $\mathbb{Q}$-martingale, for all predictable processes $\gamma(t,\zeta)$ such that
 \begin{equation}
 \int_0^T \int_{\mathbb{R}} \gamma^2(t,\zeta)\beta^2(t,\zeta) \nu(d\zeta)dt < \infty.
 \end{equation}

 We also introduce, for $0 \leq t \leq   r  \leq T$,
\[
\Phi^{(1)}(t,r)=\Phi(t,r)\,,\quad 
\Phi^{(2)}(t,r)=\int_t^r \Phi(t,s) \Phi(s,r) ds 
\]
and  inductively
\begin{equation}
\Phi^{(n)}(t,r)=\int_t^r \Phi^{(n-1)} (t,s) \Phi(s,r) ds \,,\quad  n=3, 4, \cdots\,. 
\end{equation}
\begin{remark} Note that if $|\Phi(t,r)| \leq C$ (constant) for all $t,r$, then by induction
\begin{align}
|\Phi^{(n)}(t,r)| \leq \frac{C^n T^n}{n !}
\end{align}
for all $t,r,n$. Hence,
\begin{align}
  \sum_{n=1}^{\infty} | \Phi^{(n)}(t,r)| < \infty
\end{align}
for all $t,r$.
\end{remark}
With these notations, we can state our main theorem of this paper as follows:

\begin{theorem} 
Put 
\begin{equation}
  \Psi(t,r)= \displaystyle \sum_{n=1}^\infty  \Phi^{(n)}(t,r)\,. \label{e.def_psi}
  \end{equation}Then we have the following explicit form of the solution triplet:
\begin{itemize}
\item[(i)]\ The $Y$ component of the solution triplet is given by 
\begin{align} \label{eq1.10}
Y(t)  &=\mathbb{E}_{\mathbb{Q}}\left[ F(t)\Big|\mathcal{F}_t\right]+  \int_t^T  \Psi(t,r) \mathbb{E}_{\mathbb{Q}}\left[ F(r) \Big|\mathcal{F}_t\right] dr\nonumber \\
&=\mathbb{E}_{\mathbb{Q}}\left[ F(t) +  \int_t^T  \Psi(t,r)   F(r)dr  \Big|\mathcal{F}_t\right] \,.
\end{align}  
\item[(i)] The $Z$ and $K$ components of the solution triplet are given by the following:\\
Define 
\begin{equation}\label{eq1.11}
U(t)= F(t)+\int_t^T  \Phi(t,r) Y(r)  dr-Y(t);\quad 0 \leq t \leq T.
\end{equation}
Then $Z(t,s)$ and $K(t,s,\zeta)$ can be expressed by the Hida-Malliavin derivatives $D_s$ and $D_{s,\zeta}$ with respect to $B$ and $N$, respectively, as follows:
\begin{equation} \label{eq1.14}
Z(t,s) = \mathbb{E}_{\mathbb{Q}}[D_sU(t)-U(t)\int_s^T D_s\xi(r)dB_{\mathbb{Q}}(r)|\mathcal{F}_s];\quad     
0\le   t \leq s \leq T 
\end{equation}
and
\begin{equation}\label{eq1.15}
K(t,s,\zeta) = \mathbb{E}_{\mathbb{Q}}[U(t)(\tilde{H}_s-1)+\tilde{H}_sD_{s,\zeta}U(t) | \mathcal{F}_s];\quad 
    0\le  t \leq s \leq T,
\end{equation}
where 
\begin{align}
\tilde{H}_s&= \exp \Big[ \int_0^s \int_{\mathbb{R}}[D_{s,x}\beta(r,x) + \log (1-\frac{D_{s,x} \beta(r,x)}{1-\beta(r,x)})(1-\beta(r,x) ]\nu(dx)dr \nonumber\\
&+ \int_0^s \int_{\mathbb{R}}\log (1-\frac{D_{s,x} \beta(r,x)}{1-\beta(r,x)} \tilde{N}_{\mathbb{Q}}(dr,dx) \Big].
\end{align}
\end{itemize}
\end{theorem}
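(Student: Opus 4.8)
The plan is to exploit the Girsanov-type change of measure already set up in the excerpt in order to remove the unknown integrands $Z$ and $K$ from the drift of \eqref{e.1.1}. Under $\mathbb{Q}$, combining the definition of $B_{\mathbb{Q}}$ in \eqref{e.def_bq} with that of $\tilde{N}_{\mathbb{Q}}$ in \eqref{e.def_nq}, the two algebraic identities
\[
\xi(s)Z(t,s)\,ds - Z(t,s)\,dB(s) = -Z(t,s)\,dB_{\mathbb{Q}}(s)
\]
and
\[
\int_{\mathbb{R}}\beta(s,\zeta)K(t,s,\zeta)\nu(d\zeta)\,ds - \int_{\mathbb{R}}K(t,s,\zeta)\tilde{N}(ds,d\zeta) = -\int_{\mathbb{R}}K(t,s,\zeta)\tilde{N}_{\mathbb{Q}}(ds,d\zeta)
\]
turn \eqref{e.1.1} into the equivalent equation
\[
Y(t) = F(t) + \int_t^T \Phi(t,s)Y(s)\,ds - \int_t^T Z(t,s)\,dB_{\mathbb{Q}}(s) - \int_t^T\int_{\mathbb{R}}K(t,s,\zeta)\tilde{N}_{\mathbb{Q}}(ds,d\zeta),
\]
in which the only unknown left in the drift is $Y$ itself.

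For the $Y$-component I would fix $t$, write this transformed equation for each $Y(r)$ with $r\in[t,T]$, and apply $\mathbb{E}_{\mathbb{Q}}[\,\cdot\,|\mathcal{F}_t]$. Because the $dB_{\mathbb{Q}}$- and $d\tilde{N}_{\mathbb{Q}}$-integrals over $[r,T]$ are $\mathbb{Q}$-martingale increments (here the boundedness of $\Phi,\xi,\beta$ together with $\beta\ge -1+\varepsilon$ ensures these are genuine, not merely local, $\mathbb{Q}$-martingales, so their conditional expectations vanish), the function $y_t(r):=\mathbb{E}_{\mathbb{Q}}[Y(r)|\mathcal{F}_t]$ solves the linear Volterra equation
\[
y_t(r) = \mathbb{E}_{\mathbb{Q}}[F(r)|\mathcal{F}_t] + \int_r^T \Phi(r,s)\,y_t(s)\,ds,\qquad r\in[t,T].
\]
Iterating this equation and interchanging the order of integration reproduces exactly the iterated kernels $\Phi^{(n)}$, so the Neumann/resolvent solution is $y_t(r)=\mathbb{E}_{\mathbb{Q}}[F(r)|\mathcal{F}_t]+\int_r^T\Psi(r,s)\mathbb{E}_{\mathbb{Q}}[F(s)|\mathcal{F}_t]\,ds$, the absolute convergence being supplied by the bound $\sum_n|\Phi^{(n)}(r,s)|\le e^{CT}$ from the Remark. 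Setting $r=t$ yields precisely \eqref{eq1.10}.

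For the $(Z,K)$-components, once $Y$ is known and $U(t)$ is defined as in \eqref{eq1.11}, the transformed equation collapses to
\[
U(t) = \int_t^T Z(t,s)\,dB_{\mathbb{Q}}(s) + \int_t^T\int_{\mathbb{R}}K(t,s,\zeta)\tilde{N}_{\mathbb{Q}}(ds,d\zeta),
\]
which is exactly the $\mathbb{Q}$-martingale representation of $U(t)$ against the driving pair $(B_{\mathbb{Q}},\tilde{N}_{\mathbb{Q}})$; the integrands are thus uniquely determined. To rewrite them through the Hida-Malliavin derivatives $D_s$ and $D_{s,\zeta}$ taken with respect to the \emph{original} $B$ and $N$, I would invoke the generalized Clark-Ocone-Haussmann formula under a change of measure. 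Its Brownian part produces the correction term $-U(t)\int_s^T D_s\xi(r)\,dB_{\mathbb{Q}}(r)$, giving \eqref{eq1.14}, while its jump part, obtained by Malliavin-differentiating the Radon-Nikodym density $M(T)$ with respect to $N$, produces the factor $\tilde{H}_s$ and hence \eqref{eq1.15}.

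The main obstacle is this last step: the Clark-Ocone formula is classically stated under the reference measure, whereas here the representation is required under $\mathbb{Q}$, so one must differentiate the density process $M$ itself. The Brownian correction is the standard Karatzas-Ocone term, but the jump correction $\tilde{H}_s$ demands computing $D_{s,\zeta}$ of the exponential density built from $\ln(1+\beta)$ and carefully separating the predictable from the non-predictable contributions. Verifying that $U(t)$ belongs to the relevant Hida-Malliavin test/distribution space, so that these derivatives and the resulting representation are all legitimate, is where the genuine technical work concentrates.
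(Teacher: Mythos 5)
Your proposal is correct and follows essentially the same route as the paper: the Girsanov transformation to absorb the $Z$ and $K$ terms into $B_{\mathbb{Q}}$ and $\tilde{N}_{\mathbb{Q}}$, conditioning to obtain a linear Volterra equation for $\mathbb{E}_{\mathbb{Q}}[Y(\cdot)\,|\,\mathcal{F}_t]$ solved by the Neumann series $\Psi$, and then the martingale representation of $U(t)$ combined with the Clark--Ocone formula under change of measure for $Z$ and $K$. The only difference is cosmetic (you condition directly on $\mathcal{F}_t$ rather than on $\mathcal{F}_r$ for $r\le t$ and then specialize), and you are in fact more explicit than the paper about the integrability needed for the stochastic integrals to be genuine $\mathbb{Q}$-martingales.
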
 
 
\begin{proof}
With the processes  $B_{\mathbb{Q}}$ and $\tilde{N}_{\mathbb{Q}}$ 
defined in \eqref{e.def_bq}-\eqref{e.def_nq} we can
eliminate the unknowns   $Z(t,s)$ and $K(t,s,\zeta)$  inside      the first integral 
in \eqref{e.1.1}. More precisely,  
we can  rewrite  equation \eqref{e.1.1} as
\begin{equation}
 Y(t)= F(t)+\int_t^T  \Phi(t,s) Y(s)  ds-\int_t^T Z(t,s) dB_{\mathbb{Q}}(s)-\int_t^T \int_{\mathbb{R}}K(t,s,\zeta)\tilde{N}_{\mathbb{Q}}(ds,d\zeta)\,, 
 \label{e.1.2}
 \end{equation}
where $ 0\le t\le T$.  Taking the conditional $\mathbb{Q}$-expectation on $\mathcal{F}_t$, we get
\begin{eqnarray}
Y(t)&=& \mathbb{E}_{\mathbb{Q}}\left[ F(t)+\int_t^T  \Phi(t,s) Y(s)  ds\big|\mathcal{F}_t\right]\nonumber\\
 &=& \tilde F(t,t)+\int_t^T  \Phi(t,s) \mathbb{E}_{\mathbb{Q}}\left[Y(s)   \big|\mathcal{F}_t\right]ds
\,,\quad 0\le t\le T\,.\label{1.1.5}
\end{eqnarray}
Here, and in what follows, we denote
\begin{equation}
\tilde{F}(t,s)=\mathbb{E}_{\mathbb{Q}}\left[F(t) \big|\mathcal{F}_s\right]\,. 
\end{equation}
Fix $r\in [0,  t]$.   Taking the conditional $\mathbb{Q}$-expectation on $\mathcal{F}_r$ of \eqref{1.1.5}, we get
\begin{eqnarray}
\EE_Q \left[Y(t)|\cF_r\right]= \tilde F(t,r)+\int_t^T  \Phi(t,s) \mathbb{E}_Q\left[Y(s)   \big|\mathcal{F}_r\right]ds
\,,\quad r\le t\le T\, 
\end{eqnarray}
Denote 
\[
\tilde Y(s)=\EE_Q \left[Y(s)|\cF_r\right]\,,\quad r\le s\le T\,.
\]
Then the above equation can be written as 
\[
\tilde Y(t)=\tilde F(t,r)+\int_t^T  \Phi(t,s) \tilde Y(s) ds
\,,\quad r\le t\le T\,.
\]
Substituting $ 
\tilde Y(s)=\tilde F(s,r)+\int_s^T  \Phi(s,u) \tilde Y(u) du$ in the above equation,  we obtain
\begin{eqnarray}
\tilde Y(t)
 &=& \tilde{F}(t,r)+\int_t^T  \Phi(t,s)\left\{\tilde{F}(s,r) 
 +\int_s^T  \Phi(s,u)  \tilde  Y(u)   du\right\} ds\nonumber\\
 &=&  \tilde{F}(t,r)+\int_t^T  \Phi(t,s) \tilde{F}(s,r) ds 
 +\int_t^T  \Phi^{(2)} (t,u)  \tilde Y(u) du  ds
\,,\quad r\le t\le T\,, \nonumber
\end{eqnarray}   

By  repeatedly using the above argument, we get
\begin{eqnarray}
\tilde Y(t) 
 &=&  \tilde F(t,r)+\sum_{n=1}^\infty \int_t^T  \Phi^{(n)}(t,u) \tilde F(u,r) du  \nonumber\\
 &=& \tilde F(t,r)+  \int_t^T  \Psi(t,u) \tilde F(u,r) du \,,
\end{eqnarray}   
where  $\Psi$ is defined by \eqref{e.def_psi}. 
Now substituting $\EE_Q(Y(s)|\cF_t)=\tilde Y(s) $ (with $r=t$)
into \eqref{1.1.5} we obtain part (i) of the theorem. 
 
It remains to prove \eqref{eq1.14}-\eqref{eq1.15}. 
By \eqref{e.1.2} we have
\begin{equation}
U(t)= \int_t^T Z(t,s)dB_Q(s) + \int_t^T \int_{\mathbb{R}}K(t,s,\zeta)\tilde{N}_Q(ds,d\zeta); \quad 0\leq t\leq s\leq T.
\end{equation}
Note that by the Clark-Ocone formula  under change of measure (see \cite {H},  \cite{O1}), extended to $L^2(\mathbb{Q},\mathcal{F}_T)$ as in \cite{AaOPU},
we get 
\begin{equation}
Z(t,s) = \mathbb{E}_Q[D_sU(t)-U(t)\int_s^T D_s\xi(r)dB_Q(r)|\mathcal{F}_s];\quad t \leq s \leq T 
\end{equation}
and
\begin{equation}
K(t,s,\zeta) = \mathbb{E}_Q[U(t)(\tilde{H}_s-1)+\tilde{H}_sD_{s,\zeta}U(t) | \mathcal{F}_s];\quad t \leq s \leq T 
\end{equation}
where 
\begin{align}
\tilde{H}_s&= \exp \Big[ \int_0^s \int_{\mathbb{R}}[D_{s,x}\beta(r,x) + \log (1-\frac{D_{s,x} \beta(r,x)}{1-\beta(r,x)})(1-\beta(r,x) ]\nu(dx)dr \nonumber\\
&+ \int_0^s \int_{\mathbb{R}}\log (1-\frac{D_{s,x} \beta(r,x)}{1-\beta(r,x)} \tilde{N}_Q(dr,dx) \Big],
\end{align}
as claimed.
\end{proof} 
 
\section{Application to smoothness of the solution triplet}
It is of interest to study when the solution components $Z(t,s), K(t,s,\zeta)$ are smooth ($C^1$) with respect to $t$.  Such smoothness  properties  are important in the  study of
optimal control (see e.g. \cite{AOY}). It is also important
in the numerical solutions (see e.g. \cite{hns} and references therein).     Using the explicit form of the solution triplet  (Theorem 1.2) we can give sufficient conditions for such smoothness in the linear case.

\begin{theorem}
Assume that $\xi,\beta$ are deterministic and that $F(t)$ and $\Phi(t,s)$ are $C^1$ with respect to $t$ satisfying
\begin{equation}
\mathbb{E}_{\mathbb{Q}}\Big[\int_0^T (\int_t^T \Big\{ F^2(t)+\Phi^2(t,s)+(\frac{dF(t)}{dt})^2 + (\frac{\partial \Phi(t,s)}{\partial t})^2 \Big\}ds)dt \Big] < \infty.
\end{equation}
Then, for $t < s ≤ T,$ 
\begin{align}
Z(t,s)&=\mathbb{E}_{\mathbb{Q}}[D_sF(t) + \int_t^T D_s(\phi(t,r)Y(r)dr) | \mathcal{F}_s],\\
K(t,s,\zeta)&=\mathbb{E}_{\mathbb{Q}}[D_{s,\zeta}F(t) + \int_t^T D_{s,\zeta}(\phi(t,r)Y(r)dr) | \mathcal{F}_s].
\end{align}
In particular, we have 
\begin{equation}
\mathbb{E}_{\mathbb{Q}}\left[  \int_{0}^{T} (\int_{t}^{T}\left(  \frac{\partial Z}{\partial
t}\left(  t,s\right)  \right)  ^{2}ds) dt+\int_{0}^{T} (\int_{t}^{T}\int_{
\mathbb{R}
}\left(  \frac{\partial K}{\partial t}\left(  t,s,\zeta\right)  \right)  ^{2}
\nu\left(  d\zeta\right)  ds)dt\right]  <\infty.
\end{equation}

\end{theorem}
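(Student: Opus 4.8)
The plan is to read off the two displayed representations of $Z$ and $K$ directly from Theorem 1.2 using that $\xi,\beta$ are now deterministic, and then to differentiate in $t$ and estimate the resulting integrands by an It\^o isometry under $\mathbb{Q}$. First I would specialize \eqref{eq1.14}--\eqref{eq1.15}. Since $\xi(r)$ is deterministic, $D_s\xi(r)=0$, so the correction term $U(t)\int_s^T D_s\xi(r)\,dB_{\mathbb{Q}}(r)$ disappears and $Z(t,s)=\mathbb{E}_{\mathbb{Q}}[D_sU(t)\,|\,\mathcal{F}_s]$. Since $\beta(r,x)$ is deterministic, $D_{s,x}\beta(r,x)=0$, hence $\tilde{H}_s=\exp(0)=1$ and $K(t,s,\zeta)=\mathbb{E}_{\mathbb{Q}}[D_{s,\zeta}U(t)\,|\,\mathcal{F}_s]$. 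Now insert \eqref{eq1.11}, $U(t)=F(t)+\int_t^T\Phi(t,r)Y(r)\,dr-Y(t)$. For $s>t$ the variable $Y(t)$ is $\mathcal{F}_t$-measurable, so its Hida--Malliavin derivatives vanish, $D_sY(t)=0$ and $D_{s,\zeta}Y(t)=0$; the $-Y(t)$ term therefore drops out, and using $D_s(\Phi(t,r)Y(r))=\Phi(t,r)D_sY(r)$ (as $\Phi$ is deterministic) one obtains exactly the two stated formulas for $Z(t,s)$ and $K(t,s,\zeta)$.

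Next I would differentiate in $t$. Put
\[
G(t):=\frac{\partial F}{\partial t}(t)+\int_t^T\frac{\partial\Phi}{\partial t}(t,r)\,Y(r)\,dr .
\]
Differentiating $Z(t,s)=\mathbb{E}_{\mathbb{Q}}[D_sF(t)+\int_t^T\Phi(t,r)D_sY(r)\,dr\,|\,\mathcal{F}_s]$ under the conditional expectation, the Leibniz boundary term at $r=t$ equals $-\Phi(t,t)D_sY(t)$, which vanishes for $s>t$; commuting $\partial_t$ with $D_s$ in the $F$-term then gives $\frac{\partial Z}{\partial t}(t,s)=\mathbb{E}_{\mathbb{Q}}[D_sG(t)\,|\,\mathcal{F}_s]$, and likewise $\frac{\partial K}{\partial t}(t,s,\zeta)=\mathbb{E}_{\mathbb{Q}}[D_{s,\zeta}G(t)\,|\,\mathcal{F}_s]$.

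Third I would control the $L^2$-norm. Because $\xi,\beta$ are deterministic, the Clark--Ocone formula under $\mathbb{Q}$ reduces to the plain martingale representation driven by $B_{\mathbb{Q}}$ and $\tilde{N}_{\mathbb{Q}}$, so applying it to $G(t)$ and using the It\^o isometry (the $\mathbb{Q}$-compensator of $N$ being $(1+\beta)\nu(d\zeta)\,ds$) yields
\[
\mathbb{E}_{\mathbb{Q}}\Big[\int_0^T\!\big(\mathbb{E}_{\mathbb{Q}}[D_sG(t)|\mathcal{F}_s]\big)^2ds+\int_0^T\!\int_{\mathbb{R}}\big(\mathbb{E}_{\mathbb{Q}}[D_{s,\zeta}G(t)|\mathcal{F}_s]\big)^2(1+\beta(s,\zeta))\nu(d\zeta)ds\Big]\le\mathbb{E}_{\mathbb{Q}}[G(t)^2].
\]
Since $\beta\ge-1+\varepsilon$ and $\beta$ is bounded, $1+\beta\ge\varepsilon>0$, so the $\nu$-weighted integral is dominated by $\varepsilon^{-1}\mathbb{E}_{\mathbb{Q}}[G(t)^2]$; restricting the $s$-integration to $[t,T]$ only lowers the nonnegative left side. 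Integrating in $t$ then bounds the target quantity by a constant times $\int_0^T\mathbb{E}_{\mathbb{Q}}[G(t)^2]\,dt$. Finally, $(a+b)^2\le 2a^2+2b^2$, Cauchy--Schwarz applied to $\int_t^T\frac{\partial\Phi}{\partial t}(t,r)Y(r)\,dr$, the $L^2(\mathbb{Q})$-bound $\mathbb{E}_{\mathbb{Q}}[Y(r)^2]\le\mathbb{E}_{\mathbb{Q}}[(F(r)+\int_r^T\Psi(r,u)F(u)\,du)^2]$ coming from \eqref{eq1.10} with $\Psi$ bounded (the Remark), and the standing integrability hypothesis on $F,\Phi,\partial_tF,\partial_t\Phi$ together make $\int_0^T\mathbb{E}_{\mathbb{Q}}[G(t)^2]\,dt$ finite, completing the argument.

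The step I expect to be the main obstacle is not the algebra above but the rigorous justification of interchanging $\partial_t$ with the conditional expectation and with the Hida--Malliavin derivative: one must show that $t\mapsto Z(t,s)$ is genuinely $C^1$ with derivative given by the formal expression, equivalently that the relevant difference quotients converge in $L^2(\mathbb{Q})$, and that $Y(r)$ and $G(t)$ lie in the domain of $D$ so that the Clark--Ocone and isometry estimates are legitimate. This requires a dominated-convergence argument built on the $C^1$ hypotheses and the uniform $L^2$-bounds, together with the fact that conditional expectation and the Hida--Malliavin derivative preserve $L^2$-differentiability.
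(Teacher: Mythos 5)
Your proposal is correct and follows essentially the same route as the paper: specialize \eqref{eq1.14}--\eqref{eq1.15} using that $\xi,\beta$ deterministic kills the $D_s\xi$ correction and forces $\tilde H_s=1$, then use that $Y(t)$ is $\mathcal{F}_t$-measurable so $D_sY(t)=D_{s,\zeta}Y(t)=0$ for $s>t$ when substituting \eqref{eq1.11}. You go further than the paper on the final $L^2$-bound for $\partial_tZ$ and $\partial_tK$ (via the It\^o isometry under $\mathbb{Q}$ and the bound $1+\beta\ge\varepsilon$), which the paper simply asserts; the only caveats are that the stated formulas do not require $\Phi$ to be deterministic (the derivative $D_s$ is left acting on the product $\Phi(t,r)Y(r)$), and, as you yourself flag, the interchange of $\partial_t$ with $D_s$ and the conditional expectation is left at the level of a plausibility argument.
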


\begin{proof}
Since $Y(t)$ is $\mathcal{F}_t$-measurable, we get that $D_sY(t)=D_{s,\zeta}Y(t)=0$ for all $s>t$. Hence by \eqref{eq1.11}
\begin{equation}
\mathbb{E}_{\mathbb{Q}}[D_sU(t)|\mathcal{F}_s]= \mathbb{E}_{\mathbb{Q}}[D_sF(t) +\int_t^T D_s(\Phi(t,r)Y(r))dr | \mathcal{F}_s]
\end{equation}
and
\begin{equation}
\mathbb{E}_{\mathbb{Q}}[D_{s,\zeta}U(t) | \mathcal{F}_s]= \mathbb{E}_{\mathbb{Q}}[D_{s,\zeta}F(t) +\int_t^T D_{s,\zeta}(\Phi(t,r)Y(r))dr | \mathcal{F}_s].
\end{equation}
Then the result follows from \eqref{eq1.14} and \eqref{eq1.15}.
 \end{proof}

\end{document}